\DeclareSymbolFont{symbols3}{LS1}{stixbb}{m}{n}
\DeclareMathSymbol{\bigslopedvee}{\mathbin}{symbols3}{"A7}
\newcounter{fig}
\newcounter{clai}
\newcounter{exa}
\newcounter{theorem}[section]
\renewcommand{\thetheorem}{\arabic{section}.\arabic{theorem}}
\newcounter{nona}[theorem]
\newcounter{nonanona}[theorem]
\renewcommand{\thenona}{\Alph{nona}}
\renewcommand{\thenonanona}{\alph{nonanona}}
\newenvironment{theorem}{\begin{trivlist}\item[]\refstepcounter{theorem}%
        {\bf\thetheorem\ Theorem}\par\nobreak\noindent\sl\ignorespaces}{%
        \ifvmode\smallskip\fi\end{trivlist}}
\newenvironment{theoremplus}[1]{\begin{trivlist}\item[]%
        \refstepcounter{theorem}{\bf\thetheorem\ Theorem} {\rm (\,#1\,)}%
        \par\nobreak\noindent\sl\ignorespaces}{%
        \ifvmode\smallskip\fi\end{trivlist}}
\newenvironment{lemma}{\begin{trivlist}\item[]\refstepcounter{theorem}%
        {\bf\thetheorem\ Lemma}\par\nobreak\noindent\sl\ignorespaces}{%
        \ifvmode\smallskip\fi\end{trivlist}}
\newenvironment{conjectureplus}[1]{\begin{trivlist}\item[]%
        \refstepcounter{theorem}{\bf\thetheorem\ Conjecture} %
        {\rm(\,#1\,)}\par\nobreak\noindent\sl\ignorespaces}{%
        \ifvmode\smallskip\fi\end{trivlist}}
\newenvironment{observation}{\begin{trivlist}\item[]\refstepcounter{theorem}%
        {\bf\thetheorem\ Observation}\par\nobreak\noindent\ignorespaces}{%
        \ifvmode\smallskip\fi\end{trivlist}}
\newenvironment{question}{\begin{trivlist}\item[]%
        \refstepcounter{theorem}{\bf\thetheorem\ Question}%
        \par\nobreak\noindent\sl\ignorespaces}{%
        \ifvmode\smallskip\fi\end{trivlist}}
\newenvironment{problem}{\begin{trivlist}\item[]%
        \refstepcounter{theorem}{\bf\thetheorem\ Problem}%
        \par\nobreak\noindent\sl\ignorespaces}{%
        \ifvmode\smallskip\fi\end{trivlist}}
\newcommand{\wideitem}[1]{\leavevmode\hangindent\mathindent\noindent%
        \hbox to \mathindent{#1\hfil}\ignorespaces}
\newcommand{\eop}{\rule{1.4ex}{1.4ex}}
\newcommand{\B}{\mathcal{B}}
\newcommand{\U}{{\cal U}}
\newcommand{\M}{{\cal M}}
\newcommand{\V}{{\cal V}}
\title{\bf Serial exchanges in random bases}
\author{Sean McGuinness \\ Dept. of Mathematics\\ Thompson Rivers University\\
McGill Road, Kamloops BC\\ V2C5N3 Canada\\ email: smcguinness@tru.ca}
\date{}
\begin{document}

\maketitle

\begin{abstract}
It was conjectured by Kotlar and Ziv \cite{KotZiv} that for any two bases $B_1$ and $B_2$ in a matroid $M$ and any subset $X \subset B_1$, there is a subset $Y$ and orderings $x_1 \prec x_2 \prec \cdots \prec x_k$ and $y_1 \prec y_2 \prec \cdots \prec y_k$
of $X$ and $Y$, respectively, such that for $i = 1, \dots ,k$, $B_1 - \{ x_1, \dots ,x_i\} + \{y_1, \dots ,y_k \}$ and $B_2 - \{ y_1, \dots ,y_i\} + \{x_1, \dots ,x_k \}$ are bases; that is, $X$ is {\it serially exchangeable} with $Y$.    Let $M$ be a rank-$n$ matroid which is representable over $\mathbb{F}_q.$ We show that for $q>2,$ if bases $B_1$ and $B_2$ are chosen randomly amongst all bases of $M$, and if a subset $X$ of size $k \le \ln(n)$ is chosen randomly in $B_1$, then with probability tending to one as $n \rightarrow \infty$, there exists a subset $Y\subset B_2$ such that $X$ is serially exchangeable with $Y.$

\bigskip\noindent
{\sl AMS Subject Classifications (2012)}\,: 05D99,05B35.
\end{abstract}

\section{Introduction}

Let $\B(M)$ denote the set of bases in a matroid $M.$  For convenience, if we create a new basis from a basis $B$ by deleting $X\subset B$ and adding elements of $Y,$ then we denote the resulting basis by $B-X+Y$.  In the case where $X = \{ x_1, \dots ,x_k \}$ and $Y= \{ y_1, \dots ,y_k \},$ we will often write the new basis as $B - x_1 - \cdots - x_k +y_1 + \cdots + y_k.$  For example, if $X = \{ e \}$ and $Y = \{ f \},$ then the new basis is just $B-e+f.$

For all positive integers $k$, we let $[k]$ denote the set $\{ 1, \dots ,k \}.$  For positive integers $a, b$ where $a \le b,$ we let $[b\backslash a]$ denote the set $[b] - [a].$ 

Let $B_i \in \B(M),\ i = 1,2$.  For elements $x\in B_1$ and $y\in B_2$, we say that $x$ and $y$ are {\bf symmetrically exchangeable}  with respect to $B_i,\ i = 1,2$ if
$B_1'=B_1 -x +y$ and $B_2' = B_2-y+x$ are bases.  %In performing such an exchange, we write $(B_1,B_2) \exchange{x}{y} (B_1',B_2').$

We have the following well-known {\bf symmetric exchange property}:  for any element $x\in B_1$ there exists $y\in B_2$ for which $x$ and $y$ are symmetrically exchangeable.  For bases $B_i,\ i = 1,2$ and subsets $X_i \subseteq B_i,\ i = 1,2$ we write
$B_1, X_1 \rightarrow B_2, X_2$ if $B_2 - X_2 + X_1$ is a basis (and we write $B_1, X_1 \leftarrow B_2, X_2$ if $B_1 - X_1 + X_2$ is a basis).  We write $B_1, X_1 \leftrightarrow B_2, X_2$ if we have both $B_1, X_1 \rightarrow B_2, X_2$ and $B_1, X_1 \leftarrow B_2, X_2$.
It was shown by Greene \cite{Gre} and Woodall \cite{Woo} that the symmetric exchange property can be generalized to sets:   

\begin{theoremplus}{Greene, Woodall}
For every non-empty subset $X_1 \subseteq B_1$ there exists a subset $X_2 \subseteq B_2$ such that $B_1, X_1 \leftrightarrow B_2, X_2$. \label{the-Woodall}
\end{theoremplus}

For $i = 1,2,$ let $B_i$ be a basis in a matroid $M$ and let $X_i$ be a subset of $B_i.$  %We write $B_1,A_1 \stackrel[M]{}{\rightarrow} B_2,A_2$ (respectively,  $B_1,A_1 \stackrel[M]{}{\leftarrow} B_2,A_2$) if 
%for some ordering $a_{11} \prec a_{12} \prec \cdots \prec a_{1k}$ of $A_1$ and some ordering $a_{21} \prec a_{22} \prec \cdots \prec a_{2k}$ of $A_2$, the sets  $B_2 - \{ a_{21}, \dots ,a_{2i} \} + \{a_{11}, \dots ,a_{1i} \}$ (respectively, $B_1 - \{ a_{11}, \dots ,a_{1i} \} + \{a_{21}, \dots ,a_{2i} \}$), $i = 1, \dots ,n$ are bases.   
%
If for some ordering $x_{11} \prec x_{12} \prec \cdots \prec x_{1k}$ of $X_1$ and some ordering $x_{21} \prec x_{22} \prec \cdots \prec x_{2k}$ of $X_2$, $B_1 - \{ x_{11}, \dots ,x_{1i} \} + \{x_{21}, \dots ,x_{2i} \}$ and $B_2 - \{ x_{21}, \dots ,x_{2i} \} + \{x_{11}, \dots ,x_{1i} \}$ are bases for $i = 1, \dots ,n$, then we write  $B_1,X_1 \stackrel[M]{}{\leftrightarrows} B_2,X_2$ (dropping $M$ when it is implicit).  In this case, 
we say that $X_1$ is {\bf serially exchangeable} with $X_2$ with respect to $B_i,\ i = 1,2.$  We refer to such orderings of $X_i,\ i=1,2$ as {\bf serial orderings}.  For a matroid $M$ and positive integer $k$, we say that $M$ has the $\mathbf{k}${\bf- serial exchange property} if for any two bases $B_1,B_2 \in \B(M)$ and any subset $X_1 \subseteq B_1$ where $|X_1| =k,$ there is a subset $X_2 \subseteq B_2$ where $B_1,X_1 \stackrel[]{}{\leftrightarrows} B_2,X_2$.

By the symmetric exchange property, all matroids have the $1$-serial exchange property. In   \cite{KotZiv}, Kotlar and Ziv made the following interesting conjecture:

\begin{conjectureplus}{Kotlar, Ziv \cite{KotZiv}}
For all matroids $M$ and for all integers $k \le r(M)$, $M$ has the $k$-serial exchange property. \label{con-KotZiv}
\end{conjectureplus}

In \cite{KotZiv}, it was shown that all matroids have the $2$-serial exchange property.  
%\begin{theorem}
%For any subset $A_1 \subseteq B_1$ where $|A_1| = 2,$ there exists a subset $A_2 \subseteq B_2$ such that $A_1$ can be serially exchanged with $A_2.$\label{the-KotZiv}
%\end{theorem}
Furthermore, in \cite{Kot}, it was shown that for matroids of rank at least three, for any two bases $B_1,B_2$, there exist $3$-subsets $A_i\subseteq B_i,\ i = 1,2$ such that $A_1$ is serially exchangeable with $A_2.$ 
%In this paper, we will verify Conjecture \ref{con-KotZiv} for sparse paving matroids and graphic matroids.  Furthermore, we shall extend Theorem \ref{the-KotZiv} in the case of binary matroids by showing that this conjecture holds when $|X|=3.$
%Conjecture \ref{con-KotZiv} implies the following well-known conjecture of Gabow \cite{Gab} and Cordovil, Moreira \cite{CorMor}:
In \cite{Mcg}, it was shown that for the set of matroids $\M_q$ representable over $\mathbb{F}_q,$ it suffices to prove Conjecture \ref{con-KotZiv} for matroids $M \in \M_q$ having rank at most $(k+2)q^{2k}.$  In addition, it is proven that all binary matroids have the $3$-serial exchange property.  Conjecture \ref{con-KotZiv} implies the following well-known conjecture of Gabow \cite{Gab} and Cordovil, Moreira \cite{CorMor}:

\begin{conjectureplus}{Gabow, Cordovil and Moreira}
Let $B_i$, for $i = 1,2,$ be disjoint bases of a matroid $M$ having rank $n.$ Then there exists orderings $b_1 \prec b_2 \prec \cdots \prec b_n$ and $b_{n+1} \prec b_{n+2} \prec \cdots \prec b_{2n}$ of the elements of $B_1$ and $B_2$ respectively, such that any $n$ consecutive elements in the cyclic ordering $b_1 \prec b_2 \prec \cdots \prec b_n \prec b_{n+1} \prec \cdots \prec b_{2n}$ form a basis in $M.$\label{con-GCMZ}
\end{conjectureplus} 

In other words, the above conjecture asserts that for disjoint bases $B_1$ and $B_2$, the base $B_1$ is serially exchangeable with $B_2.$  This conjecture in known to be true for transversal matroids (see \cite{Far,Gab}) and strongly base orderable matroids.  In \cite{FarRicSha, Wie}, this conjecture was proven for graphic matroids.   In \cite{Bon}, it was verified for sparse paving matroids.  

Conjecture \ref{con-KotZiv} remains open even in the case $k=3$ and there appears to be little prospect for a significant advance.  However, there are some interesting questions that arise when bases are chosen randomly.  In a recent paper, Sauermann \cite{Sau} proved that Rota's basis conjecture (see \cite{HuaRot}) holds for almost all bases chosen in a matroid representable over a finite field.  That is, if $n$ bases are chosen randomly from a rank-$n$ matroid representable over a finite field, then one find $n$ independent transversals of these bases with probability $1 - o(1)$ as $n\rightarrow \infty.$  In light of the conjecture above, this result leads to the following natural question:

\begin{question}
Suppose one randomly chooses bases $B_1$ and $B_2$ from a matroid and then randomly chooses a $k$-subset $X\subseteq B_1.$  What is the probability that there is a $k$-subset $Y\subseteq B_2$ such that $X$ is serially exchangeable with $Y$?
\end{question}

In this paper, we address this question for matroids representable over a finite field, with the exception of binary matroids.  The next theorem is the main result of the paper.
%However, it turns out that the probability the conjecture is true for randomly chosen bases is close to one; that is, for any fixed $k$, the probability that Conjecture \ref{con-KotZiv} is true for randomly chosen bases tends to one as $n\rightarrow \infty.$ This is the main result of this paper; in fact, we prove something stronger.

 %Suppose one chooses two chooses bases $B_1$ and $B_2$ at random from $\mathbb{F}_q^n,$ (where the distribution of bases assumed to be uniform) and one chooses a $k$-subset $X$ in $B_1$ randomly, what is the probability that there is a $k$-subset $Y \subseteq B_2$ for which $X$ is serially exchangeable with $Y$?  It turns out that the probability of this occurring is close to one even when $k$ is quite large.  We shall prove the following theorem:

\begin{theorem}
Let $B_1$ and $B_2$ be randomly chosen bases in $\mathbb{F}_q^n$ and let $X$ be a randomly chosen $k$-subset in $B_1$ where $k \le \ln(n).$  If $q>2,$ then the probability that there exists a $k$-subset $Y\subseteq B_2$ for which  $X$ is serially exchangeable with $Y$ tends to one as $n \rightarrow \infty.$\label{the-main} 
\end{theorem}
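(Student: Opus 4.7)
The plan is to build $Y$ greedily, processing the elements of $X$ in the order $x_1, \ldots, x_k$ in which they are sampled. For $i = 0, 1, \ldots, k$, set $B_1^{(i)} := B_1 - \{x_1, \ldots, x_i\} + \{y_1, \ldots, y_i\}$ and $B_2^{(i)} := B_2 - \{y_1, \ldots, y_i\} + \{x_1, \ldots, x_i\}$, with $B_j^{(0)} = B_j$. Having chosen $y_1, \ldots, y_{i-1}$ so that $(B_1^{(i-1)}, B_2^{(i-1)})$ is a pair of bases, at step $i$ I want to find $y_i \in B_2 \setminus \{y_1, \ldots, y_{i-1}\}$ such that $x_i$ and $y_i$ are symmetrically exchangeable with respect to this pair; equivalently, $y_i \in C(x_i, B_2^{(i-1)}) \setminus \mathrm{cl}(B_1^{(i-1)} - x_i)$, where $C(x_i, B_2^{(i-1)})$ denotes the fundamental circuit of $x_i$ in $B_2^{(i-1)}$. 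As a preliminary reduction, a direct count gives $\mathbb{E}|B_1 \cap B_2| = n^2/(q^n - 1) = o(1)$, so with probability $1 - o(1)$ I have $B_1 \cap B_2 = \emptyset$, and in particular $X \cap B_2 = \emptyset$. Under this assumption, $B_2^{(i-1)}$ decomposes cleanly as $(B_2 - \{y_1, \ldots, y_{i-1}\}) \sqcup \{x_1, \ldots, x_{i-1}\}$, and the only obstruction to placing $y_i$ in $B_2$ is the possibility that every valid exchange partner lies in the ``extraneous'' set $\{x_1, \ldots, x_{i-1}\}$ of size at most $k - 1$.

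The heart of the argument is the single-step estimate: with probability $1 - o(1/k)$, the candidate set $V_i := (C(x_i, B_2^{(i-1)}) \setminus \{x_i\}) \setminus \mathrm{cl}(B_1^{(i-1)} - x_i)$ has size at least $k$, so $V_i$ cannot be contained in the extraneous elements. In the vector matroid, $C(x_i, B_2^{(i-1)}) \setminus \{x_i\}$ is precisely the set of basis vectors of $B_2^{(i-1)}$ appearing with nonzero coefficient in the expansion of $x_i$. Since the coordinate vector of a uniformly random nonzero vector in a uniformly random basis of $\mathbb{F}_q^n$ is itself uniform over $\mathbb{F}_q^n \setminus \{0\}$, its support is sharply concentrated around $(1 - 1/q) n$, giving $|C(x_i, B_2^{(i-1)})| \geq (1 - 1/q)n - O(\sqrt{n \log n})$ with very high probability. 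A parallel estimate on the fraction of the circuit lying in the hyperplane $\mathrm{cl}(B_1^{(i-1)} - x_i)$, which is close to $1/q$ by the same type of concentration, yields $|V_i| = \Omega(n) \gg k$. The hypothesis $q > 2$ enters here: for $q \geq 3$ the surviving fraction $(1 - 1/q)^2$ is a positive constant, whereas for $q = 2$ the density bounds become too tight and the rigid structure of binary linear dependencies removes the necessary slack.

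The main obstacle I anticipate is controlling the conditional distribution of the intermediate pair $(B_1^{(i-1)}, B_2^{(i-1)})$, which is not uniform on pairs of bases: it is a random function of the prior greedy choices. The modifications are, however, small---affecting at most $k - 1 = O(\log n)$ elements of $(B_1, B_2)$---so I would argue that the weight and hyperplane-incidence concentrations are robust under such perturbations, either by a direct coupling to the uniform model or by a union bound over the possible histories, incurring only additive losses of $O(\log n)$ that are negligible against the linear-in-$n$ bounds in the single-step estimate. A union bound over the $k \leq \ln n$ steps, together with the $o(1)$ loss from the preliminary reduction, then yields that the greedy construction succeeds with probability $1 - o(1)$, producing the desired $Y \subseteq B_2$ with $B_1, X \leftrightarrows B_2, Y$.
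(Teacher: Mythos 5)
Your route is genuinely different from the paper's. Where you build $Y$ greedily one element at a time, the paper fixes a partition of the (ordered) basis $B_2$ into $\ell = \lfloor n/k\rfloor$ consecutive blocks $\V_1,\dots,\V_\ell$ of size $k$, shows by a direct matrix-rank computation that the conditional probability that $\U_1$ admits each one-sided set exchange with a fixed block is at least $\alpha_k = \prod_{i=1}^k(1-q^{-i})$, applies a Chernoff bound to conclude that for $q>2$ some $Z \geq c\ell$ blocks admit two-sided set exchanges with high probability, and then argues that a block admitting a two-sided set exchange upgrades to a serial exchange with conditional probability at least $\beta_k=(1-1/q)^k$, so that $\mathbb{P}(\text{no serial exchange}) \leq \mathbb{P}(Z<c\ell) + (1-\beta_k)^{c\ell} \to 0$. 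The block design exists precisely to avoid ever analyzing how conditioning on earlier exchange events distorts the distribution of the remaining matrix entries.

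That distortion is exactly where your argument has a genuine gap. For $i\geq 2$ the law of $(B_1^{(i-1)},B_2^{(i-1)})$ conditioned on the greedy history is not uniform, and the assertion that your concentration estimates are ``robust under such perturbations'' is not established. You offer two repairs but carry out neither. A ``direct coupling'' is not described. The ``union bound over the possible histories'' is the more promising route, but for it to close you would need, for each of roughly $n^{k}$ fixed index sequences $(y_1,\dots,y_{i-1})$, a tail bound of the form $\mathbb{P}(|V_i|<k)\le e^{-\Omega(n)}$; the $\pm O(\sqrt{n\log n})$ in-probability concentration you invoke gives at best a quasi-polynomial tail, which an $n^{\Theta(\ln n)}$-fold union bound overwhelms. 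Upgrading to an exponential tail for $|C(x_i,B_2^{(i-1)})|$ and for its intersection with the hyperplane $\mathrm{cl}(B_1^{(i-1)}-x_i)$, while controlling the correlations among the columns of $B_2$ (which are not independent --- they cannot all lie in any hyperplane), is a real piece of missing work rather than a routine detail. A secondary symptom that the bottleneck has not been located: your explanation of why $q>2$ is needed (``for $q=2$ the density bounds become too tight'') does not hold in your own framework, since $(1-1/2)^2 n = n/4 = \Omega(n)$ still dwarfs $k\le\ln n$. In the paper the constraint comes from needing $\alpha=\prod_{i\ge 1}(1-q^{-i})>1/2$ so that $X+Y>\ell$ forces $Z>0$ with high probability; your outline as written does not distinguish $q=2$ from $q\ge 3$ at all, which suggests the argument is not tracking the obstruction rather than that the hypothesis can be dropped.
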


%\begin{theorem}
%Let $M\in \M_q$ be a rank-$n$ matroid.  If one randomly chooses two bases $B_1,B_2$ amongst all bases, and one randomly chooses a subset $X\subseteq B_1$ of size $k$ then, provided $(1 - \frac 1{q-1})^k > \frac 12,$  with probability tending to one as $n$ goes to infinity, there is a  $k$-subset $Y\subseteq B_2$ such that $X$ is serially exchangeable with $Y.$
%\label{the-main}
%\end{theorem}

\section{Notation}

For an $m\times n$ matrix $A= [a_{ij}]$ and subsets $S \subseteq [m],$ $T \subseteq [n],$ we denote by $A_{S,T}$ the submatrix of $A$ induced by the elements $a_{ij}$ where $i\in S$ and $j \in T.$  
%When $s$ and $t$ are positive integers, we let $A_{s,T} = A_{[s],T},$ $A_{S,t} = A_{S,[t]}$, and $A_{s,t} = A_{[s], [t]}.$  Furthermore, we let $A_{>s, T} = A_{[n] - [s], T}.$

In the model we use, each base will have equal probability of being chosen.  Rather than just choosing the bases themselves, we shall select {\it ordered} bases at random.  Given that each base has the same number of orderings, it will suffice to prove Theorem \ref{the-main} for ordered bases.  Each ordered base $B$ of $M$ corresponds to an $n$-tuple $(\mathbf{b}_1, \dots ,\mathbf{b}_n) \in \mathbb{F}_q^n$ such that $\{\mathbf{b}_1, \dots ,\mathbf{b}_n\}$ has rank $n.$  %For any subset $X$ of an ordered basis $B$, we shall refer to the order of the elements of $X$ as they appear in $B$ to be the order of $X$ {\it inherited} from $B.$

Let $B_1$ and $B_2$ be randomly chosen (ordered) bases.  Let $\mathbf{U}_1, \dots ,\mathbf{U}_n$ be random variables with values in $\mathbb{F}_q^n$  such that the $n$-tuple $(\mathbf{U}_1, \dots ,\mathbf{U}_n)$ of random variables corresponds to $B_1$. For $i = 1, \dots ,n$, let $\mathbf{U}_i = (u_{1i}, u_{2i}, \dots ,u_{ni})$ where the $u_{ji}$'s are also random variables having values in $\mathbb{F}_q.$  Similarly, let $\mathbf{V}_i,\ i = 1, \dots ,n$ be random variables having values in $\mathbb{F}_q^n$ where the $n$-tuple $(\mathbf{V}_1, \dots ,\mathbf{V}_n)$ corresponds to $B_2.$  For $i = 1, \dots ,n$, let $\mathbf{V}_i = (v_{1i}, v_{2i}, \dots ,v_{ni}).$

Let $k$ be a positive integer where we will assume $k \le \ln (n)$ and let $\ell = \lfloor \frac nk \rfloor.$  For $i = 1, \dots ,\ell$ let $\mathbf{\V}_{i} = \{ \mathbf{V}_{(i-1)k+1}, \dots ,\mathbf{V}_{ik} \}$; that is, $\mathbf{\V}_{i}$ corresponds to the $k$-subset  of elements in $B_2$ in positions $(i-1)k+1, \dots ,ik.$  %We let $\mathbf{\V}_{\le i} = \mathbf{\V}_1 \cup \cdots \cup \mathbf{\V}_i$.

After picking $B_1$ and $B_2$, we shall choose a set of $k$ elements from $B_1$ at random.  By symmetry, we may assume that these elements are the first $k$ elements in the (ordered) basis $B_1.$  
We let $\U_1 = \{ \mathbf{U}_1, \dots ,\mathbf{U}_k \},$ which corresponds to the random set of $k$ elements chosen from $B_1.$  To prove Theorem \ref{the-main} it will suffice to show that the probability that for some $i$, $B_1,\U_1 \stackrel[M]{}{\leftrightarrows} B_2,\V_i$, tends to one as $n\rightarrow \infty.$  To do this, 
we define random variables $X_1, \dots , X_{\ell}$  and $Y_1, \dots ,Y_{\ell}$ which depend on the randomly selected $B_1$ and $B_2$ as follows: for $j=1, \dots , \ell$, 
 
 $$X_j = \left\{ \begin{array}{lr} 1 &\mathrm{if} \ B_1, \U_1  \rightarrow B_2, \V_{j}\\ 0 & \mathrm{otherwise} \end{array} \right. \ \ \mathrm{and} \ \  
Y_j  =  \left\{ \begin{array}{lr} 1 &\mathrm{if} \  B_1, \U_1  \leftarrow B_2, \V_{j}  \\ 0 & \mathrm{otherwise} \end{array} \right.$$  It should be noted that the random variables $X_1, \dots , X_n$ are not necessarily independent and the same applies to $Y_1, \dots ,Y_n.$

\section{Lower bounds for probabilities}
In this section, we calculate lower bounds for the probabilities\\ $\mathbb{P}(X_i=1 \big| \ X_1, \dots X_{i-1})$ and $\mathbb{P}(Y_i=1 \big| \ Y_1, \dots ,Y_{i-1})$.
%By symmetry, we have that for all $i \ne j, \mathbb{P}(X_i =0) = \mathbb{P}(X_j =0)$ (and $\mathbb{P}(X_i =1) = \mathbb{P}(X_j =1)$).
The following lemma is a well-known fact and we include its proof for completeness.
\begin{lemma}
The probability that a randomly chosen $k\times k$ matrix over $\mathbb{F}_q$ is non-singular is
$$\alpha_k = \prod_{i=1}^k (1 - \frac 1{q^i}).$$\label{lem-nonsingular}
\end{lemma}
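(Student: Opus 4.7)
The plan is to select the $k \times k$ matrix column-by-column (equivalently row-by-row) from $\mathbb{F}_q^k$ and count the probability that each newly chosen column is linearly independent from the span of the previous ones, conditioning on the past. Since a matrix is non-singular exactly when its columns form a basis of $\mathbb{F}_q^k$, this chain of conditional probabilities multiplies to give the total probability of non-singularity.

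More concretely, I would pick a uniformly random $k \times k$ matrix $A$ over $\mathbb{F}_q$ by choosing its columns $\mathbf{a}_1, \dots ,\mathbf{a}_k$ independently and uniformly from $\mathbb{F}_q^k$. For $i = 1, \dots ,k$, let $E_i$ denote the event that $\mathbf{a}_1, \dots ,\mathbf{a}_i$ are linearly independent. Then $A$ is non-singular iff $E_k$ holds, and
$$\mathbb{P}(A \text{ non-singular}) = \prod_{i=1}^{k} \mathbb{P}(E_i \mid E_{i-1}),$$
with the convention $E_0$ = (sure event). Conditional on $E_{i-1}$, the vectors $\mathbf{a}_1, \dots ,\mathbf{a}_{i-1}$ span a subspace of $\mathbb{F}_q^k$ of dimension $i-1$ and hence of size $q^{i-1}$. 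Since $\mathbf{a}_i$ is uniform on $\mathbb{F}_q^k$ (a set of size $q^k$) and independent of $\mathbf{a}_1, \dots ,\mathbf{a}_{i-1}$, the probability that $\mathbf{a}_i$ avoids this span is
$$1 - \frac{q^{i-1}}{q^k} = 1 - \frac{1}{q^{k-i+1}}.$$

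Multiplying these conditional probabilities for $i = 1, \dots ,k$ and reindexing $j = k - i + 1$ gives
$$\mathbb{P}(A \text{ non-singular}) = \prod_{i=1}^{k}\left(1 - \frac{1}{q^{k-i+1}}\right) = \prod_{j=1}^{k}\left(1 - \frac{1}{q^{j}}\right) = \alpha_k,$$
as required. There is no real obstacle here; the only care needed is in handling the conditioning and observing that the dimension of the span of $i-1$ linearly independent vectors is exactly $i-1$, so the counting argument reindexes cleanly to the stated product formula.
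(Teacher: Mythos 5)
Your proof is correct and takes essentially the same approach as the paper: both fill in the matrix one row/column at a time and observe that the new vector must avoid the span of the previous ones (a set of size $q^{i-1}$ out of $q^k$). The only cosmetic difference is that the paper counts the number of non-singular matrices and then divides by $q^{k^2}$, while you phrase the same sequential argument directly as a product of conditional probabilities.
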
 

\begin{proof}
We first calculate the number of $k\times k$ nonsingular matrices $A$ there are over $\mathbb{F}_q.$  To do this, we fill in the rows of $A$ one at-a-time.  There are $q^k -1$ ways of filling in row $1.$
Filling in the second row, there are $q^k$ possible selections with the caveat that no selection can be a multiple of the first row.  Thus there are $q^k -q$ ways to fill in the second row.  Suppose now that we have filled in rows $1, \dots ,i.$  Again, there are $q^k$ choices for row $i+1$ with the caveat that no such selection can be a linear combination of rows $1, \dots ,i.$  Thus there are $q^k - q^i$ selections for row $i+1$.  Continuing, we see that in all there are $\prod_{i=1}^k (q^k - q^{i-1})$ non-singular matrices.  Since altogether there are
$q^{k^2}$ possible matrices $A$, the probability that $A$ is singular is $$\frac {\prod_{i=1}^k (q^k - q^{i-1})}{q^{k^2}} = \frac {q^{\frac{k^2-k}2}\prod_{i=1}^k (q^i - 1)}{q^{k^2}} = \prod_{i=1}^k \left( \frac {q^i -1}{q^i} \right) =  \prod_{i=1}^k (1 - \frac 1{q^i}).$$
\end{proof}

From {\it Euler's Theorem} (see \cite[Theorem 353]{HarWri}), we have $\prod_{i=1}^\infty (1-x^n) = 1 - x -x^2 + x^5 + x^7 - x^{12} - x^{15} \cdots$.  In particular, this theorem implies that for $q >1,$ $\alpha = \prod_{i=1}^\infty (1 - \frac 1{q^i}) > 1 - \frac 1q - \frac 1{q^2}.$

We say that a $k\times k$ matrix $A$ has {\bf sequential full rank} if for $i = 1, \dots ,k$, the submatrix $A_{[i],[i]}$ has full rank.
%The proof of following lemma is an easy exercise.
%\begin{lemma}
%Let $A$ be a non-singular $k\times k$ matrix over $\mathbb{F}_q$.  Then one can permute the columns of $A$ so as to obtain a matrix $B$ which is rank aligned.\label{lem-full}
%\end{lemma}
%Let $A$ be an $k\times k$ matrix and let $\pi$ be a permutation of its columns.  We say that $A$ is full with respect to $\pi$ if the matrix $A'$ obtained by applying $\pi$ to the columns of $A$ is full.
 
 \begin{observation}
 There are $(q-1)^k q^{k^2 -k}$ matrices $A$ over $\mathbb{F}_q$ for which $A$ has sequential full rank.\label{obs2}
 Moreover, the probability that a randomly selected $k\times k$ matrix has sequential full rank equals $(1 - \frac 1q)^k.$
 \end{observation}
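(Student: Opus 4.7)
The plan is to count matrices of sequential full rank by filling in the columns of $A$ one at a time and, at each step $i$, counting how many completions of column $i$ preserve the property, conditional on columns $1, \ldots, i-1$ already satisfying it. The main observation I expect to drive everything is that this count turns out to depend neither on $i$ nor on the particular previously chosen columns, which makes the multiplication across steps painless.

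For the inductive step, suppose that after placing columns $1, \ldots, i-1$ the submatrix $A_{[i-1],[i-1]}$ has full rank $i-1$. Then the first $i-1$ columns of $A$ restricted to the rows $[i]$ are linearly independent vectors in $\mathbb{F}_q^i$, since dropping the last coordinate already yields linearly independent vectors in $\mathbb{F}_q^{i-1}$. Their span $W \subseteq \mathbb{F}_q^i$ is therefore an $(i-1)$-dimensional subspace and contains exactly $q^{i-1}$ points. The condition that $A_{[i],[i]}$ has full rank is precisely that the vector $(a_{1,i}, \ldots, a_{i,i})^T$ lies outside $W$, giving $q^i - q^{i-1} = q^{i-1}(q-1)$ admissible choices for the first $i$ entries of column $i$; the remaining $k-i$ entries $a_{i+1,i}, \ldots, a_{k,i}$ are unconstrained and contribute a factor $q^{k-i}$. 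So the number of valid completions of column $i$ is $q^{i-1}(q-1) \cdot q^{k-i} = (q-1) q^{k-1}$, which is indeed independent of $i$ and of the previous columns.

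Multiplying over $i = 1, \ldots, k$ gives $(q-1)^k q^{k(k-1)} = (q-1)^k q^{k^2 - k}$ matrices of sequential full rank, and dividing by the total count $q^{k^2}$ of $k\times k$ matrices over $\mathbb{F}_q$ yields the claimed probability $(1 - 1/q)^k$. The only place where care is needed is the verification that $W$ is genuinely $(i-1)$-dimensional inside $\mathbb{F}_q^i$ rather than collapsing when the last row is appended, but this is immediate from the assumed full rank of $A_{[i-1],[i-1]}$; beyond that small check, the argument is a clean product of geometric counts with no real obstacle.
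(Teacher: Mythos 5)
Your proof is correct, and it takes a route that is genuinely different in detail from the paper's, though both are inductive counts. The paper grows the leading submatrix $A_{[i],[i]}$ by \emph{bordering}: at step $j+1$ it adds the $2j+1$ new entries of $A_{[j+1],[j+1]}$, noting that the $2j$ off-corner border entries are free and only the corner entry $a_{(j+1)(j+1)}$ is constrained (since the cofactor of $a_{(j+1)(j+1)}$ in $\det(A_{[j+1],[j+1]})$ is $\det(A_{[j],[j]})\ne 0$, the determinant is a nonconstant affine function of that entry, so exactly one value is excluded). You instead fill in whole columns, and at step $i$ count vectors in $\mathbb{F}_q^i$ lying outside the $(i-1)$-dimensional span of the previously placed columns restricted to rows $[i]$. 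Your approach is in fact the natural adaptation of the paper's own proof of Lemma 3.1 (filling rows of a nonsingular matrix) to the sequential-full-rank setting, and has the pleasant feature that the per-step count $(q-1)q^{k-1}$ is uniform in $i$ and independent of the earlier choices, which makes the final product immediate. The paper's bordering argument localizes the constraint to a single scalar entry; yours localizes it to a single column. Both are correct and about equally short; your justification that the restriction to rows $[i]$ of the first $i-1$ columns remains independent is exactly the point one must verify, and you handle it cleanly.
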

 
\begin{proof}
Let $A = (a_{ij})$ be any $k\times k$ matrix over $\mathbb{F}_q$.  We shall fill in the entries for $A_{[i],[i]}$ in the order $i = 1,2, \dots, k$ so as to obtain a matrix with sequential full rank.  The matrix $A_{[1],[1]}$, which has only one entry, can be any non-zero element of $\mathbb{F}_q.$  Thus there are $q-1$ possible entries.
Next, we fill in the remaining $3$ entries of $A_{[2],[2]}$, one-by-one. The entries $a_{12}$ and $a_{21}$ can be any element of $\mathbb{F}_q,$ but to assure that $det(A_{[2],[2]}) \ne 0,$ there are only $q-1$ possible elements for the remaining entry $a_{22}.$  Thus there are $(q-1)^2q^2$ possible matrices $A_{[2],[2]}.$  Continuing inductively, suppose there are $(q-1)^{j}q^{j^2 - j}$ possible matrices for $A_{[j],[j]}$ having sequential full rank.  We now fill in the remaining $2(j+1)-1 = 2j+1$ entries of $A_{[j+1],[j+1]}$ one-by-one.  The $2j$ entries other than $a_{(j+1)(j+1)},$ can be any element in $\mathbb{F}_q$. Assuring that $det(A_{[j+1],j[+1]}) \ne 0,$ means that there are only $q-1$ possible values for the last entry $a_{(j+1)(j+1)}.$  Note that here we use the assumption that $det(A_{[j],[j]}) \ne 0.$  Thus there are\\ $(q-1)^{j}q^{j^2 - j} \cdot (q-1)q^{2j} = (q-1)^{j+1} q^{(j+1)^2 - (j+1)}$ selections for $A_{[j+1],[j+1]}.$  It follows by induction that there
are $(q-1)^k q^{k^2 -k}$ $k\times k$ matrices $A$ having sequential full rank.  Given that there are $q^{k^2}$ possible $k\times k$ matrices, the probability a randomly chosen $k\times k$ matrix has sequential full rank equals\\ $\frac {(q-1)^k q^{k^2 -k}}{q^{k^2}} = (1 - \frac 1q)^k.$     
\end{proof}

Let $\beta_k = (1 - \frac 1q )^k.$
Let $\overline{U} = (u_{ij})$ be the $n\times n$ (random) matrix whose $i$'th column corresponds to $\mathbf{U}_i$, and let $\overline{V} = (v_{ij})$ be the $n\times n$ (random) matrix whose $i$'th column corresponds to $\mathbf{V}_i.$  Such matrices $\overline{U}$ and $\overline{V}$ of full rank correspond in a one-to-one fashion with the ordered bases $B_1$ and $B_2.$

For $i = 0, \dots ,\ell$, let  $n_i = i k$ and let $J_i = [n_i\backslash n_{i-1}] = \{ n_{i-1}+1, \dots ,n_i \}.$
% For subsets $S \subseteq [n]$ and $T \subseteq [n]$, we let $\Gamma_{S,T}$ be the submatrix of $\Gamma$ induced by the entries $\gamma_{ij}$ where $i\in S$ and $j\in T.$ When $s$ and $t$ are positive integers, we let $\Gamma_{s,T} = \Gamma_{[s],T},$ $\Gamma_{S,t} = \Gamma_{S,[t]}$, and $\Gamma_{s,t} = \Gamma_{[s], [t]}.$  Furthermore, we let $\Gamma_{>s, T} = \Gamma_{[n] - [s], T}.$
%$\Gamma_i$ be the $n\times k$ matrix whose columns correspond to the (random) vectors in $\V_i.$  Let $\Gamma_{i,k}$ the submatrix of $\Gamma_i$ induced be the first $k$ rows.  Let $\Gamma_{\le i}$ be the $n\times ki$ matrix whose columns correspond to the (random) vectors in $\V_1 \cup \cdots \cup \V_i.$  Let $\Gamma_{\le i,k}$ the submatrix of $\Gamma_{\le i}$ induced by the first $k$ rows. 

\begin{lemma}
For $i = 1, \dots ,\ell,$ $\mathbb{P}(X_i =1 \big| \ X_1, \dots ,X_{i-1}) \ge \alpha_k.$ \label{lem2}
\end{lemma}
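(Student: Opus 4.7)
The plan is to reformulate the question via a single uniform random rank-$k$ matrix, then prove a stronger bound that survives after conditioning on the entire first block of rows. Let $A$ be the first $k$ columns of $\overline{V}^{-1}\overline{U}$. Because $\overline{U}$ and $\overline{V}$ are independent and each uniform over nonsingular $n\times n$ matrices, $\overline{V}^{-1}\overline{U}$ is itself uniform over nonsingular matrices, so $A$ is uniform over $n\times k$ matrices of rank $k$. A standard block-determinant expansion of the matrix of $B_2-\V_j+\U_1$ in the $\mathbf{V}$-basis (whose columns are the columns of $A$ in positions $J_j$ and standard basis vectors $e_l$ in positions $l\notin J_j$) gives determinant $\pm\det(A_{J_j,[k]})$; hence $X_j = 1$ iff $A_{J_j,[k]}$ is nonsingular.

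Since $J_j\subseteq [n_{i-1}]$ for every $j<i$, the events $\{X_1=x_1,\dots,X_{i-1}=x_{i-1}\}$ depend only on $A_{[n_{i-1}],[k]}$. Thus, by averaging, it suffices to establish the stronger pointwise inequality
$$\mathbb{P}\bigl(A_{J_i,[k]} \text{ nonsingular}\,\bigm|\,A_{[n_{i-1}],[k]}=M\bigr)\ge \alpha_k$$
for every $M$ in the support. Writing $r=\text{rank}(M)$, $s=k-r$, $n'=n-n_{i-1}$, and $R=\text{rowspan}(M)\subseteq\mathbb{F}_q^k$, conditional on $M$ the bottom block $N\in\mathbb{F}_q^{n'\times k}$ of $A$ is uniform over matrices for which $\text{rowspan}(N)+R=\mathbb{F}_q^k$. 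A short count via the quotient $\mathbb{F}_q^k/R\cong\mathbb{F}_q^s$ gives $\prod_{j=0}^{s-1}(q^{n'}-q^j)\cdot q^{n'r}$ admissible $N$ in total; and among these, the number whose first $k$ rows form a nonsingular $k\times k$ matrix (which automatically ensures the rank condition) is $\prod_{j=0}^{k-1}(q^k-q^j)\cdot q^{(n'-k)k}$. After cancelling matching powers of $q$, the ratio simplifies to
$$\frac{\alpha_k}{\prod_{j=0}^{s-1}(1-q^{j-n'})},$$
which is $\ge\alpha_k$ because every factor in the product lies in $(0,1]$, with equality iff $r=k$.

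The only real obstacle is the final algebraic simplification; the key conceptual move --- conditioning on the full truncated block $A_{[n_{i-1}],[k]}$ rather than on the $X_j$'s --- sidesteps the dependence structure among the $X_j$'s, after which the bound follows from explicit counts. It is pleasing that the rank-deficient case $r<k$ actually yields a strictly larger conditional probability, reflecting the intuition that a rank-deficient $M$ forces the bottom block $N$ to be more spread out, and hence more likely to have a nonsingular top $k\times k$ submatrix.
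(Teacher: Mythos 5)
Your proof is correct and follows the same basic route as the paper's --- reduce to asking whether the rows $J_i$ of the first $k$ columns form a nonsingular $k\times k$ block, conditional on the earlier blocks --- but you fill in a step that the paper's own argument leaves essentially unjustified. After the paper takes $B_2$ to be the standard basis, $\overline{U}_{[n],[k]}$ is uniform over $n\times k$ rank-$k$ matrices and $X_i=1$ iff $\overline{U}_{J_i,[k]}$ is nonsingular; the paper then invokes Lemma~\ref{lem-nonsingular} to say the relevant probability equals $\alpha_k$. But that lemma concerns a \emph{uniform} $k\times k$ matrix, whereas here $\overline{U}_{J_i,[k]}$ is neither unconditionally uniform (the global rank-$k$ constraint skews its law) nor independent of the preceding blocks $\overline{U}_{J_1,[k]},\dots,\overline{U}_{J_{i-1},[k]}$. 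Your argument supplies the missing monotonicity step: after passing to $A=\overline{V}^{-1}\overline{U}$ (which is the same reduction --- these are just the coordinates of $B_1$ in the $B_2$-basis), conditioning on the full truncated block $A_{[n_{i-1}],[k]}=M$ makes the conditional law of the bottom $n'\times k$ block $N$ explicit, and your counts --- $\prod_{j=0}^{s-1}(q^{n'}-q^j)\cdot q^{n'r}$ admissible $N$, of which $\prod_{j=0}^{k-1}(q^k-q^j)\cdot q^{(n'-k)k}$ have nonsingular top $k\times k$ submatrix --- give the exact conditional probability $\alpha_k\big/\prod_{j=0}^{s-1}(1-q^{j-n'})\ge\alpha_k$, where $n'\ge k$ (because $i\le\ell=\lfloor n/k\rfloor$) forces each factor of the denominator into $(0,1)$. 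A softer way to see this final inequality, without the exact count, is to note that conditioning on admissibility shrinks the sample space from all of $\mathbb{F}_q^{n'\times k}$ but removes no $N$ whose top $k\times k$ block is nonsingular, since any such $N$ already satisfies the row-span condition; so the conditional probability can only increase from the unconditional $\alpha_k$. Either way, your proposal is the rigorous version of what the paper sketches.
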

\begin{proof}
It suffices to prove the lemma when $B_2$ is given.  Since there is an isomorphism mapping any base into another, we may assume that $B_2$ is the standard basis for $\mathbb{F}_q^n;$  that is, we may assume that for $i=1, \dots ,n,$ $\mathbf{V}_i = \mathbf{e}_i.$
 
 We shall choose $B_1$ by choosing the corresponding rank-$n$ matrix $\overline{U}$ randomly amongst all $n\times n$ matrices of full rank.  We do so by first choosing the first $k$ columns of $\overline{U}$ (that is, $\overline{U}_{[n],[k]}$) randomly among all $n\times k$ rank-$k$ matrices.   We select the entries for $\overline{U}_{[n],[k]}$ by selecting the entries for $\overline{U}_{J_i,[k]},$ in the order $i=1,2, \dots ,\ell$ and then selecting the remaining entries for the rest of $\overline{U}_{[n],[k]}$.    
 
 Since $B_2$ is the standard basis, we have $B_1, \U_1 \rightarrow B_2, \V_{i}$ if and only if $\overline{U}_{J_i,[k]}$ has full rank. Thus $X_i =1$ if and only if $\overline{U}_{J_i,[k]}$ has full rank.% To see this, suppose $\overline{U}_{J_i,k}$ has full rank.  By Lemma \ref{lem-full}, one can permute the columns of $\overline{U}_{J_i,k}$ so that the resulting matrix is full and as such we have $B_1, \U_1  \stackrel[M]{}{\rightarrow }B_2, \V_{i}$, irregardless of how one selects the remaining entries in $\overline{U}_{[n],[k]}.$  On the other hand, if $B_1, \U  \stackrel[M]{}{\rightarrow }B_2, \V_{i}$, then $B_2 - \V_i + \U_1$ is a basis and this in turn implies that the matrix $\overline{U}_{J_i,k}$ has full rank.
Consequently, for each $i$, the probability $\mathbb{P}(X_i=1 \big| \ X_1, \dots X_{i-1}),$ will be at least the probability that $\overline{U}_{J_i,[k]}$ has full rank.  By Lemma \ref{lem-nonsingular}, this probability equals $\alpha_k = \prod_{i=1}^k (1 - \frac 1{q^i}).$ Thus $\mathbb{P}(X_i =1\big| \ X_1, \dots ,X_{i-1}) \ge \alpha_k.$
  
\end{proof}

\begin{lemma}
For $i = 1, \dots ,\ell,$ $\mathbb{P}(Y_i = 1 \big| \ Y_1, \dots ,Y_{i-1}) \ge \alpha_k.$  \label{lem1}
\end{lemma}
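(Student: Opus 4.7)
The plan is to mirror the proof of Lemma \ref{lem2}, interchanging the roles of rows and columns. It suffices to prove the lemma with $B_1$ fixed, and by applying a linear isomorphism we may assume that $B_1$ is the standard basis, so $\mathbf{U}_i = \mathbf{e}_i$ for $i = 1, \dots, n$. Under this assumption, $B_1, \U_1 \leftarrow B_2, \V_i$ holds if and only if $\{\mathbf{V}_{(i-1)k+1}, \dots, \mathbf{V}_{ik}\} \cup \{\mathbf{e}_{k+1}, \dots, \mathbf{e}_n\}$ is a basis of $\mathbb{F}_q^n$. Using the trailing standard vectors to clear the last $n-k$ entries of the first $k$ columns, this is equivalent to the $k\times k$ block $\overline{V}_{[k], J_i}$ being non-singular. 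Thus $Y_i = 1$ if and only if $\overline{V}_{[k], J_i}$ has full rank.

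Next, I would choose $\overline{V}$ uniformly at random among all full-rank $n\times n$ matrices over $\mathbb{F}_q$. Since this distribution is invariant under transposition, the first $k$ rows $\overline{V}_{[k],[n]}$ are uniformly distributed among all $k\times n$ matrices of rank $k$. Then fill in the entries of $\overline{V}_{[k],[n]}$ by selecting the blocks $\overline{V}_{[k],J_j}$ in the order $j = 1, 2, \dots, \ell$, followed by the remaining columns; this is precisely the row-column dual of the selection procedure used in Lemma \ref{lem2}.

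The same conditioning argument as in Lemma \ref{lem2} now yields $\mathbb{P}(Y_i = 1 \big| \ Y_1, \dots, Y_{i-1}) \ge \alpha_k$. The key observation is that if the entries of $\overline{V}_{[k],[n]}$ were drawn independently and uniformly in $\mathbb{F}_q$ (without any rank constraint), the blocks $\overline{V}_{[k],J_j}$ would be mutually independent and each uniformly distributed in $\mathbb{F}_q^{k\times k}$, so by Lemma \ref{lem-nonsingular} each would have full rank with probability exactly $\alpha_k$. Conditioning on $\overline{V}_{[k],[n]}$ having rank $k$ can only bias this probability upward, since any single block $\overline{V}_{[k],J_i}$ of full rank automatically forces the overall rank to be $k$; a short Bayes-rule computation formalises this. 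I do not anticipate any real obstacle beyond verifying the transposition-invariance step, which is immediate from the fact that transposition is a bijection on the set of full-rank $n\times n$ matrices.
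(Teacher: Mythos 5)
Your proposal is correct, and it takes a genuinely different route from the paper. The paper's proof of this lemma builds $\overline{V}$ one column block $\overline{V}_{[n],J_i}$ at a time and carries out a detailed enumeration over the rank profile $r_0 \le r_1 \le \cdots \le r_k$ of the top-$k$-row submatrices, lower-bounding the number $N$ of admissible extensions and dividing by $\prod_{j=0}^{k-1}(q^n - q^{n_{i-1}+j})$; it is several displayed computations long. Your argument instead reduces the whole statement to the transpose dual of Lemma \ref{lem2}: since the uniform distribution on $GL_n(\mathbb{F}_q)$ is invariant under transposition, the top $k$ rows $\overline{V}_{[k],[n]}$ are uniformly distributed over rank-$k$ matrices in $\mathbb{F}_q^{k\times n}$, the blocks $\overline{V}_{[k],J_j}$ carve that strip into independent uniform $k\times k$ blocks in the unconditioned model, and conditioning on the rank-$k$ event $R$ can only raise the conditional probability of $A_i = \{\overline{V}_{[k],J_i} \text{ nonsingular}\}$ because $A_i \subseteq R$. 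The short Bayes computation you gesture at is exactly right: writing $E = \bigcap_{j<i} A_j^{\epsilon_j}$, one has $\mathbb{P}(A_i \mid E \cap R) = \mathbb{P}(A_i \cap E)/\mathbb{P}(E \cap R) \ge \mathbb{P}(A_i \cap E)/\mathbb{P}(E) = \alpha_k$ by independence and $E \cap R \subseteq E$. This is the same argument the paper uses (somewhat implicitly) for Lemma \ref{lem2}, so your observation that the two lemmas are mirror images under transposition lets one avoid the paper's rank-sequence bookkeeping entirely. The only thing worth stating more carefully is that the Bayes step must be carried out conditionally on $Y_1,\dots,Y_{i-1}$, not just unconditionally, but as indicated above this goes through without change.
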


\begin{proof}
It suffices to prove the lemma when $B_1$ is given.  Since for any two bases in $\mathbb{F}_q^n$ there is an isomorphism mapping one to the other, we may assume that $B_1$ is the standard basis; that is,
$\mathbf{U}_i = \mathbf{e}_i,\ i = 1, \dots ,n.$
We now construct our basis $B_2$ by choosing the matrix $\overline{V}$ randomly among all $n\times n$ matrices of full rank.  We do that by choosing $\overline{V}_{[n], J_1}, \overline{V}_{[n], J_2}, \dots ,\overline{V}_{[n], J_\ell}$ in this order, and then selecting the remaining entries for $\overline{V}.$
Assume that we are given $\overline{V}_{[n], [n_{i-1}]};$ that is, $\overline{V}_{[n], J_1}, \dots ,\overline{V}_{[n],J_{i-1}}$ have be selected where $rank (\overline{V}_{[n],[n_{i-1}]}) = n_{i-1}.$  
Among all $n\times k$ matrices of rank $k,$ we choose $\overline{V}_{[n], J_i}$ as follows:   
First, we randomly choose elements for $\overline{V}_{[k],J_i},$ with the restriction that it have full rank.  This will insure that $B_1, \U_1 \leftarrow B_2, \V_i$.
Next, we fill in the remaining columns of $\overline{V}_{[n],J_i}$ with the restriction that the matrix $\overline{V}_{[n],[n_i]}$ have rank $n_i.$
As was seen in the proof of Lemma \ref{lem-nonsingular}, there are 
$q^{\frac {k^2-k}2}\prod_{j=1}^k(q^i -1)$ possible $k\times k$ matrices having full rank.  Thus we have as many choices for $\overline{V}_{[k],J_i}$.   
Let $r_0 = rank(\overline{V}_{[k],[n_{i-1}]})$ and for $j = 1, \dots ,k$, let  $r_j = rank(\overline{V}_{[k], [n_{i-1} +j]}),$ noting that $r_k = k.$ 

 For $p=1, \dots ,k$ let $\mathbf{w}_p$ be the $p$'th column vector in $\overline{V}_{[n],J_i}$.  Furthermore, let $\mathbf{w}_{[k],p} \in \mathbb{F}_q^k$ be the $p$'th column vector of $\overline{V}_{[k],J_i}$ and let $\mathbf{w}_{[n\backslash k],p}$ be the $p$'th column vector of  $\overline{V}_{[n\backslash k],J_i}.$   That is, $\mathbf{w}_{[k],p}$ is the vector $\mathbf{w}_p$ restricted to its first $k$ components, and $\mathbf{w}_{[n\backslash k],p}$ is the vector $\mathbf{w}_p$ restricted to its last $n-k$ components.
 We shall assume that $\mathbf{w}_{[k],p},\ p=1, \dots ,k$ have been selected so that $rank (\overline{V}_{[k],[J_i]}) =k$.  It remains to select $\mathbf{w}_{[n\backslash k],p},\ p=1, \dots ,k$ in such a way that $rank (\overline{V}_{[n], [n_i]}) = n_i.$  
 
 Suppose $\mathbf{w}_{[k],1}$ is in the column space of $\overline{V}_{[k],[n_{i-1}]};$ that is, \\ $r_1 = rank(\overline{V}_{[k], [n_{i-1} +1]}) =r_0.$  Given that $\overline{V}_{[k],[n_{i-1}]}$ has rank $r_0$, there are exactly $q^{n_{i-1} -r_0}$ linear combinations of the columns of $\overline{V}_{[k],[n_{i-1}]}$ which equal $\mathbf{w}_{[k],1}$.  Thus there are  $q^{n-k} - q^{n_{i-1} -r_0}$ selections for $\mathbf{w}_{[n\backslash k],1}$ for which the matrix $\overline{V}_{[n],[n_{i-1}+1]}$ has rank $n_{i-1}+1.$  
 
 Suppose instead that $\mathbf{w}_{[k],1}$ does not belong to the column space of $\overline{V}_{[k], [n_{i-1}]}.$  Then choosing any vector $\mathbf{w}_{[n\backslash k],1} \in \mathbb{F}_q^{n-k}$ will result in the matrix $\overline{V}_{[n],[n_{i-1}+1]}$ having rank $n_{i-1}+1.$ Thus there are $q^{n-k}$ selections for $\mathbf{w}_{[n\backslash k],1}$ in this case.  
 
 In general, let $p \in [k]$ and assume that the vectors $\mathbf{w}_{[n\backslash k],1}, \dots ,\mathbf{w}_{[n\backslash k],p-1}$ have been selected so that $rank (\overline{V}_{[n], [n_{i-1} + p-1]}) = n_{i-1} + p-1.$
   We will now determine the number of selections for $\mathbf{w}_{[n\backslash k],p}.$  If $r_p = r_{p-1}  = r,$ then there are exactly $q^{n_{i-1}+p-1 -r}$ linear combinations of the columns of $\overline{V}_{[k],[n_{i-1}+p-1]}$ which equal $\mathbf{w}_{[k],p}$.
 Thus there are $q^{n-k} - q^{n_{i-1}+p-1 -r}$ selections for $\mathbf{w}_{[n\backslash k],p}$  for which $\overline{V}_{[n],[n_{i-1}+p]}$ has rank $n_{i-1}+p.$
 If $r_p =  r_{p-1}+1,$ then $\mathbf{w}_{[k],p}$ is not in the column space of $\overline{V}_{[k], [n_{i-1}+p-1]}$ and hence one can choose any vector $\mathbf{w}_{[n\backslash k],p} \in \mathbb{F}_q^{n-k }$ and $\overline{V}_{[n], [n_{i-1}+p]}$ will have rank $n_{i-1} +p.$  Thus there are $q^{n-k}$ selections for $\mathbf{w}_{[n\backslash k],p}$ in this case. 
 
 Given that $rank(\overline{V}_{[k], J_i}) = k,$ there are exactly $k_0 = k-r_0$ integers $p\in [k]$ for which $r_p = rank(\overline{V}_{[k],[n_{i-1}+p]}) > r_{p-1} = rank(\overline{V}_{[k], [n_{i-1} + p-1]})$ and we let $p_1 < \cdots < p_{k_0}$ be such integers.  Let $p_0 = 0$. 
% Let $\beta_0 = 0$ and 
For $j=1, \dots ,k_0,$ let $\beta_j = p_j - p_{j-1} -1$.  Let $\beta_{k_0 +1} = k - p_{k_0}.$  For $j = 0, \dots ,k_0,$ let $s_j = r_{p_j}.$
 We see that $s_j = r_0 + j,\ j = 1, \dots ,k_0.$  Also, $k_0 + \beta_1 + \beta_2 + \dots  + \beta_{k_0+1} =k$ and hence $\beta_1 + \dots \beta_j + j-1 + (k - s_{j-1}) + \beta_{j+1} + \cdots + \beta_{k_0+1} = k$. Observing that $\beta_1 + \cdots + \beta_{j-1} + j-1 = p_{j-1},$ it follows from the previous equation that
 
 $$k- s_{j-1} + p_{j-1} =  k - \beta_j - \cdots - \beta_{k_0+1}.$$  
 and hence $$s_{j-1} - p_{j-1} =  \beta_j + \cdots + \beta_{k_0+1}.$$
 
 Suppose $\beta_j = p_j - p_{j-1} -1 >0$ for some $j \in [k_0].$ Then each of the vectors $\mathbf{w}_{[k],p_{j-1} +1},\cdots , \mathbf{w}_{[k],p_{j-1} +\beta_i}$ belong to the column space of $\overline{V}_{[k], n_{i-1} + p_{j-1}}$.  Thus the number of choices for the vectors
 $\mathbf{w}_{[n\backslash k],p_{j-1} +1},\cdots , \mathbf{w}_{[n\backslash k],p_{j-1} +\beta_i}$ is 
 \begin{align*}
 \prod_{u=1}^{\beta_j} (q^{n-k} - q^{n_{i-1} + p_{j-1} +u -1 - s_{j-1} }) &=  \prod_{u=1}^{\beta_j} (q^{n-k} - q^{(n_{i-1}-k) + k+ p_{j-1} +u -1 - s_{j-1} })\\ &= \prod_{u=1}^{\beta_j} (q^{n-k} - q^{(n_{i-1} -k) +  k - \beta_j - \cdots - \beta_{k_0+1} + (u-1)}).
 \end{align*}
 
 Let $N$ be the number of ways one can choose
 $\overline{V}_{[n],J_i}$ so that $\overline{V}_{[k],J_i}$ has full rank and the matrix  $\overline{V}_{[n], [n_i]}$ has rank $n_i$.  By the above we have,
 
\begin{align*}
N &\ge \prod_{i=0}^{k-1} (q^k -q^i) \prod_{p \in [k] - \{ p_1, \dots ,p_{k_0} \} } (q^{n-k} - q^{n_{i-1}+ p -1 - r_{p-1} })  \prod_{p\in \{ p_1, \dots ,p_{k_0} \} } q^{n-k}\\
&=  \prod_{i=0}^{k-1} (q^k -q^i)  \left(\prod_{\beta_j >0} \prod_{u=1}^{\beta_j} (q^{n-k} - q^{n_{i-1} + p_{j-1} +u -1 - s_{j-1} })\right)q^{k_0(n-k)}\\
&= \prod_{i=0}^{k-1} (q^k -q^i) \left(\prod_{\beta_j >0}  \prod_{u=1}^{\beta_j} \left(q^{n-k} - q^{(n_{i-1} -k) +  k - \beta_j - \cdots - \beta_{k_0+1} + (u-1)}\right) \right)q^{k_0(n-k)}
\end{align*}

Given that $k - \beta_1 - \cdots - \beta_{k_0+1} = k_0,$ one sees that $$\prod_{\beta_j >0}  \prod_{u=1}^{\beta_j} (q^{n-k} - q^{(n_{i-1} -k) +  k - \beta_j - \cdots - \beta_{k_0+1} + (u-1)}) = \prod_{j=k_0}^{k-1} (q^{n-k} - q^{(n_{i-1} -k) +j}).$$
Thus $$N \ge \prod_{i=0}^{k-1}(q^k - q^i) \cdot \prod_{j=k_0}^{k-1}(q^{n-k} - q^{n_{i-1}-k+j}) \cdot q^{k_0(n-k)}.$$

On the other hand, given $\overline{V}_{[n], n_{i-1}},$ the total number of ways in which $\overline{V}_{[n],J_i}$ can be chosen so that $rank(\overline{V}_{[n], n_i}) =n_i$ is $\prod_{j=0}^{k-1} (q^n - q^{n_{i-1} +j}).$
 Thus we have 
 \begin{align*}
 \mathbb{P} (Y_i =1\big| \ Y_1, \dots ,Y_{i-1}) &\ge \frac N{\prod_{j=0}^{k-1}(q^n - q^{n_{i-1} +j})} = \frac N{q^{k^2} \prod_{j=0}^{k-1}(q^{n-k} - q^{n_{i-1}-k +j})}\\
 %&\ge \frac {\prod_{i=0}^{k-1} (q^k -q^i) \left(\prod_{\beta_j >0}  \prod_{u=1}^{\beta_j} (q^{n-k} - q^{(n_{i-1} -k) +  k - \beta_j - \cdots - \beta_{k_0+1} + (u-1)}) \right)q^{k_0(n-k)}}{q^{k^2} \prod_{j=0}^{k-1}(q^{n-k} - q^{n_{i-1}-k +j})}\\
 &\ge \frac {\prod_{i=0}^{k-1}(q^k - q^i) \cdot \prod_{j=k_0}^{k-1}(q^{n-k} - q^{n_{i-1}-k+j}) \cdot q^{k_0(n-k)}}{q^{k^2} \prod_{j=0}^{k-1}(q^{n-k} - q^{n_{i-1}-k +j})}\\
 &= \frac  {q^{k_0(n-k)}\prod_{i=0}^{k-1} (q^k -q^i)}{q^{k^2} \prod_{j=0}^{k_0-1}(q^{n-k} - q^{n_{i-1}-k +j})}\\
 &\ge \prod_{i=0}^{k-1}\left( \frac {q^k - q^i}{q^k} \right) = \prod_{i=1}^{k} \left( 1 - \frac 1{q^i} \right) = \alpha_k. 
 \end{align*}
 \end{proof}

\subsection{The Chernoff bound}

Let $Z_1, \dots ,Z_n$ be $n$ independent $0,1$ random variables where for all $i,$\\ $\mathbb{P}(Z_i =1) = p.$  Then the sum $Z = \sum_{i=1}^n Z_i$ has a binomial $B(n,p)$ distribution
and we have the following well-known concentration inequality due to Chernoff (see \cite{AloSpe}):

\begin{theoremplus}{Chernoff Bound}
For all $0 \le t \le np,$ we have
$$\mathbb{P}(|Z - np| > t)  < 2 e^{-\frac {t^2}{3np}}.$$\label{the-Chernoff}
\end{theoremplus} 

As a direct consequence of this bound, we have
\begin{lemma}
For $Z \sim Bin(n,p),$ $\mathbb{P}(Z > (1 - \epsilon)np) > 1 - 2 e^{-\frac {\epsilon^2 np}3}.$\label{lem-Chernoff}
\end{lemma}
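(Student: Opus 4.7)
The plan is to apply the Chernoff Bound (Theorem \ref{the-Chernoff}) directly with the specific choice $t = \epsilon n p$. Assuming $0 \le \epsilon \le 1$, the hypothesis $0 \le t \le np$ is automatically satisfied, so the theorem yields
$$\mathbb{P}(|Z - np| > \epsilon n p) < 2 \exp\!\left(-\frac{(\epsilon n p)^2}{3 n p}\right) = 2 \exp\!\left(-\frac{\epsilon^2 n p}{3}\right).$$

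Next I would pass from the two-sided deviation to the one-sided lower tail. The event $\{Z \le (1-\epsilon)np\}$ is equivalent to $\{np - Z \ge \epsilon n p\}$, which is contained in $\{|Z - np| \ge \epsilon n p\}$. Consequently
$$\mathbb{P}\bigl(Z \le (1-\epsilon)np\bigr) \le \mathbb{P}\bigl(|Z-np| \ge \epsilon n p\bigr) < 2 \exp\!\left(-\frac{\epsilon^2 n p}{3}\right),$$
and taking complements gives the claimed inequality $\mathbb{P}(Z > (1-\epsilon)np) > 1 - 2 e^{-\epsilon^2 n p / 3}$.

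There is essentially no obstacle here: the lemma is a one-line consequence of the two-sided Chernoff bound combined with the trivial containment of the lower-tail event in the absolute-deviation event. The only point requiring a moment of care is verifying that $t = \epsilon n p$ lies in the admissible range $[0, np]$, which holds for any $\epsilon \in [0,1]$; this is the regime in which the bound will be invoked when it is applied later in the paper.
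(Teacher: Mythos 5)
Your argument is correct and is exactly the ``direct consequence'' the paper has in mind; the paper does not spell out a proof of Lemma~\ref{lem-Chernoff} but simply asserts it follows from the Chernoff bound, which is what you have verified by taking $t=\epsilon np$, noting the one-sided lower-tail event is contained in the absolute-deviation event, and passing to complements. The only point worth noting is the implicit restriction $0\le\epsilon\le 1$, which you correctly identify as both necessary for the hypothesis $t\le np$ and harmless since that is the regime in which the lemma is later applied.
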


Let $X = \sum_{i=1}^\ell X_i$ and $Y = \sum_{i=1}^\ell Y_i$  By Lemmas \ref{lem2} and  \ref{lem1}, it follows that the probability distributions for $X$ and $Y$ dominate the probability distribution for a random variable $Z \sim Bin(\ell, \alpha_k).$  That is, for all $a>0$,\\
$\mathbb{P}(X \ge a) \ge \mathbb{P}(Z \ge a)$ and  $\mathbb{P}(Y \ge a) \ge \mathbb{P}(Z \ge a)$. Thus by Lemma \ref{lem-Chernoff} we have the following:

\begin{lemma}
$\mathbb{P}(X \ge (1-\epsilon)\ell\alpha_k) > 1 - 2 e^{-\frac {\epsilon^2 \ell \alpha_k}3}$ and $\mathbb{P}(Y \ge (1-\epsilon)\ell\alpha_k) > 1 - 2 e^{-\frac {\epsilon^2 \ell \alpha_k}3}$ \label{lem3}.
\end{lemma}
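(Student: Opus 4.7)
The plan is to derive both tail bounds from the Chernoff bound (Lemma~\ref{lem-Chernoff}) applied to a Binomial random variable that stochastically dominates $X$ and $Y$.

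First I would formalize the stochastic domination sketched in the paragraph preceding the statement. Although the $X_i$ need not be independent, Lemma~\ref{lem2} supplies $\mathbb{P}(X_i = 1 \mid X_1, \dots, X_{i-1}) \ge \alpha_k$ almost surely. The standard remedy is to couple: introduce i.i.d.\ uniform $[0,1]$ variables $U_1', \dots, U_\ell'$, independent of $B_1$ and $B_2$, and realize $X_i = \mathbf{1}[U_i' \le q_i]$, where $q_i := \mathbb{P}(X_i = 1 \mid X_1, \dots, X_{i-1})$. Setting $Z_i' := \mathbf{1}[U_i' \le \alpha_k]$ produces independent $\mathrm{Bernoulli}(\alpha_k)$ variables with $X_i \ge Z_i'$ pointwise; summing gives $X \ge Z' := \sum_{i=1}^\ell Z_i' \sim \mathrm{Bin}(\ell, \alpha_k)$, so $\mathbb{P}(X \ge a) \ge \mathbb{P}(Z' \ge a)$ for all thresholds $a$. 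The identical argument with $Y_i$ in place of $X_i$ (invoking Lemma~\ref{lem1}) yields the corresponding domination for $Y$.

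Then I would apply Lemma~\ref{lem-Chernoff} with $n = \ell$ and $p = \alpha_k$ to obtain $\mathbb{P}(Z' > (1-\epsilon)\ell\alpha_k) > 1 - 2e^{-\epsilon^2 \ell \alpha_k/3}$, and combine this with the two stochastic-domination inequalities $\mathbb{P}(X \ge a) \ge \mathbb{P}(Z' \ge a)$ and $\mathbb{P}(Y \ge a) \ge \mathbb{P}(Z' \ge a)$ (taking $a = (1-\epsilon)\ell\alpha_k$) to conclude both displayed bounds simultaneously. There is essentially no obstacle: the only point of care is constructing the $Z_i'$ from external uniform randomness so that they are genuinely independent, while the $X_i$ and $Y_i$ need not be; the Chernoff bound itself is already stated and ready to apply.
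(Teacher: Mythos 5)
Your proposal follows exactly the route the paper takes: deduce stochastic domination of $X$ and $Y$ by a $\mathrm{Bin}(\ell,\alpha_k)$ variable from the conditional lower bounds in Lemmas~\ref{lem2} and~\ref{lem1}, then apply the Chernoff bound of Lemma~\ref{lem-Chernoff}. The only difference is that you spell out the sequential coupling that justifies the domination, whereas the paper asserts it directly in the paragraph preceding the statement; your added detail is correct and harmless.
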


By fixing $\epsilon$ in the above lemma, and given $k \le \ln (n)$, we have $\ell = \lfloor \frac nk \rfloor \rightarrow \infty$ as $n\rightarrow \infty.$ Thus $\mathbb{P}(X \ge (1- \epsilon)\ell\alpha_k) \rightarrow 1$ and $\mathbb{P}(Y \ge (1- \epsilon)\ell\alpha_k) \rightarrow 1$ as $n \rightarrow \infty.$

 We define random variables $Z_i, \ Z'_i,\ i \in [\ell]$ where
$$Z_i = \left\{ \begin{array}{lr} 1 & \mathrm{if} \  \ B_1, \U_1  \leftrightarrow B_2, \V_{i}\\ 0 &\mathrm{otherwise} \end{array} \right. \ \ \ 
Z_i' = \left\{ \begin{array}{lr} 1 & \mathrm{if} \  \ B_1, \U_1  \stackrel[]{}{\leftrightarrows}B_2, \V_{i}\\ 0 &\mathrm{otherwise} \end{array} \right. $$

Let $Z = Z_1 + \cdots + Z_{\ell}$ and $Z' = Z'_1 + \cdots + Z'_{\ell}.$ Recall that $\alpha = \prod_{i=1}^\infty (1 - \frac 1{q^i}).$ 

\begin{observation}
For $\epsilon >0,$ $\mathbb{P}(Z \ge (2(1-\epsilon)\alpha -1) \ell ) > 1 - 4 e^{-\frac {\epsilon^2 \ell \alpha}3}.$ 
Furthermore, if $q >2,$ then there is a constant $c>0$ such that $\mathbb{P}(Z \ge c\ell) \rightarrow 1,$ as $n \rightarrow \infty.$
\label{lem-Zbound}
\end{observation}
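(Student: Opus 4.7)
The plan is to exploit the simple arithmetic observation that for $0,1$-valued variables, $Z_i = X_i Y_i \ge X_i + Y_i - 1$, since $B_1,\U_1 \leftrightarrow B_2,\V_i$ holds precisely when both $B_1,\U_1 \rightarrow B_2,\V_i$ and $B_1,\U_1 \leftarrow B_2,\V_i$ hold. Summing over $i \in [\ell]$ gives $Z \ge X + Y - \ell$, so any simultaneous lower bound on $X$ and $Y$ transfers directly into a lower bound on $Z$.

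To get such a simultaneous bound, I would apply Lemma \ref{lem3} twice and combine with a union bound. On the event $\{X \ge (1-\epsilon)\ell\alpha_k\} \cap \{Y \ge (1-\epsilon)\ell\alpha_k\}$, which has probability at least $1 - 4 e^{-\epsilon^2 \ell \alpha_k / 3}$, we immediately obtain $Z \ge 2(1-\epsilon)\ell \alpha_k - \ell$. Since each factor $(1-1/q^i)$ is less than $1$, the partial product $\alpha_k$ dominates the infinite product $\alpha$; substituting $\alpha$ for $\alpha_k$ (which only weakens both the guarantee and the exponent) then yields the first stated inequality.

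For the second part, the key quantitative input is the bound $\alpha > 1 - \frac{1}{q} - \frac{1}{q^2}$ noted via Euler's theorem in the excerpt. When $q \ge 3$ this gives $\alpha > 1 - \tfrac{1}{3} - \tfrac{1}{9} = \tfrac{5}{9} > \tfrac{1}{2}$, so $2\alpha - 1 > 0$. Consequently $\epsilon > 0$ can be chosen small enough that $c := 2(1-\epsilon)\alpha - 1$ is a positive constant depending only on $q$. Since $k \le \ln n$ forces $\ell = \lfloor n/k \rfloor \to \infty$, the bound $4 e^{-\epsilon^2 \ell \alpha / 3}$ tends to $0$, so $\mathbb{P}(Z \ge c\ell) \to 1$ as $n \to \infty$.

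There is essentially no obstacle here: the argument is a one-line union bound once the inequality $Z_i \ge X_i + Y_i - 1$ is observed. The only point requiring any care is the separation of the cases $q = 2$ and $q > 2$, since the bound $2\alpha > 1$ fails for binary matroids (one computes $\alpha = \prod_{i \ge 1}(1 - 2^{-i}) \approx 0.289$), which is precisely why Theorem \ref{the-main} excludes $q = 2$.
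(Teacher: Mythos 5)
Your proof is correct and follows essentially the same route as the paper: the pointwise inequality $Z_i \ge X_i + Y_i - 1$, a union bound combining the two applications of Lemma~\ref{lem3}, replacement of $\alpha_k$ by the smaller $\alpha$, and Euler's theorem to get $\alpha > \tfrac{5}{9}$ when $q \ge 3$. Your added remark that $\alpha \approx 0.289$ for $q=2$ (so $2\alpha - 1 < 0$) is a correct and useful explanation of why the constant $c$ cannot be extracted this way in the binary case, though it is not part of the paper's proof.
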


\begin{proof}
By Lemma \ref{lem3}, we have for $\epsilon >0$, $\mathbb{P}(X \ge (1-\epsilon)\ell\alpha_k) > 1 - 2 e^{-\frac {\epsilon^2 \ell \alpha_k}3}.$ Since $\alpha = \prod_{i=1}^\infty (1 - \frac 1{q^i}) < \alpha_k$, it follows that
$\mathbb{P}(X \ge (1-\epsilon)\ell\alpha) > 1 - 2 e^{-\frac {\epsilon^2 \ell \alpha}3}.$  Likewise, we have
$\mathbb{P}(Y \ge (1-\epsilon)\ell\alpha) > 1 - 2 e^{-\frac {\epsilon^2 \ell \alpha}3}$.  Thus
\begin{align*}\mathbb{P}(X \ge (1 -\epsilon)\ell\alpha\ \mathrm{and}\ Y \ge (1 -\epsilon)\ell\alpha) &= 1 - \mathbb{P}(X < (1 -\epsilon)\ell\alpha\ \mathrm{or}\ Y < (1 -\epsilon)\ell\alpha)\\
&\ge 1 - \mathbb{P} (X < (1 -\epsilon)\ell\alpha) -  \mathbb{P} (X < (1 -\epsilon)\ell\alpha)\\
&\ge 1 - 4 e^{-\frac {\epsilon^2 \ell \alpha}3}
\end{align*}
Thus it follows that $\mathbb{P}(X+Y \ge 2(1-\epsilon)\ell\alpha) \ge 1 - 4 e^{-\frac {\epsilon^2 \ell \alpha}3}.$
Noting that $Z \ge X+Y - \ell,$ it follows that 
$\mathbb{P}(Z + \ell \ge (1-\epsilon)2\ell\alpha) \ge 1 - 4 e^{-\frac {\epsilon^2 \ell \alpha}3}$ and thus $\mathbb{P}(Z \ge (2(1-\epsilon)\alpha -1) \ell ) \ge 1 - 4 e^{-\frac {\epsilon^2 \ell \alpha}3}.$
By Euler's theorem, we have $\alpha > 1 - \frac 1q - \frac 1{q^2}.$  Thus when $q >2$,we have $\alpha > \frac 59$ and $\mathbb{P} (Z \ge (\frac 19 - \frac {10}9 \epsilon)\ell)  > 1 - 4 e^{-\frac {\epsilon^2 \ell \alpha}3}.$
In particular, choosing $\epsilon  <  \frac 1{10},$ there is a constant $c>0$ such that $\mathbb{P}(Z \ge c\ell) \rightarrow 1,$ as $n \rightarrow \infty.$
\end{proof}

We shall exploit the following observation.

\begin{observation}
Suppose $D_i,\ i = 1,2$ are bases in a matroid and $X_i \subseteq D_i,\ i = 1,2$ are subsets where $D_1, X_1 \rightarrow D_2, X_2.$  Then for any ordering $x_{21} \prec x_{21} \prec \cdots \prec x_{2k}$ of the elements of $X_2$, there is an ordering $x_{11} \prec x_{11} \prec \cdots \prec x_{1k}$ of the elements of $X_1$ such that for $i = 1, \dots ,k,$ $D_2 - \{ x_{21}, \dots ,x_{2i} \} + \{ x_{11}, \dots ,x_{1i} \}$ is a basis.
\label{obs0}
\end{observation}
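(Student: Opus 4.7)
The plan is to reduce to a contracted matroid in which $X_1$ and $X_2$ are both bases of rank $k$, and then build the ordering of $X_1$ by a one-at-a-time greedy procedure based on the matroid augmentation axiom.

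First I would pass to the contraction $M' = M / (D_2 \setminus X_2)$, which has rank $k$. Since $D_2$ is a basis of $M$ and $D_2 = (D_2 \setminus X_2) \cup X_2$, the set $X_2$ is a basis of $M'$. The hypothesis $D_1, X_1 \rightarrow D_2, X_2$ says that $D_2 - X_2 + X_1 = (D_2 \setminus X_2) \cup X_1$ is a basis of $M$, so $X_1$ is a basis of $M'$ as well. Under this reduction the claim that $D_2 - \{x_{2,1},\ldots,x_{2,i}\} + \{x_{1,1},\ldots,x_{1,i}\}$ is a basis of $M$ is equivalent to the claim that $J_i := (X_2 \setminus \{x_{2,1},\ldots,x_{2,i}\}) \cup \{x_{1,1},\ldots,x_{1,i}\}$ is a basis of $M'$.

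Next I would proceed by induction on $i$. The base case $J_0 = X_2$ is immediate. For the inductive step, $I := J_{i-1} \setminus \{x_{2,i}\}$ is independent of size $k-1$ in $M'$ and $X_1$ is a basis of size $k$, so the matroid augmentation axiom yields an element $e \in X_1 \setminus I$ with $I + e$ a basis of $M'$. I would choose $x_{1,i}$ as follows: if $x_{2,i} \in X_1$, take $x_{1,i} := x_{2,i}$ (which makes $J_i = J_{i-1}$, a basis); otherwise take $x_{1,i}$ to be any augmenting element, which then necessarily lies in $X_1 \setminus J_{i-1}$.

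The only thing that needs checking is that the selected $x_{1,i}$'s are distinct, so they really form an ordering of $X_1$. I would maintain the invariant that whenever $x_{1,j}$ was chosen by the first rule, $x_{1,j} = x_{2,j} \in X_1 \cap X_2$, and otherwise $x_{1,j} \in X_1 \setminus J_{j-1}$. A short case analysis — using that $\{x_{1,1},\ldots,x_{1,i-1}\} \subseteq J_{i-1}$, that the $x_{2,j}$'s are distinct, and that anything in $X_1 \setminus J_{j-1}$ cannot coincide with an element of $X_2$ still present in $J_{j-1}$ — shows $x_{1,i} \ne x_{1,j}$ for all $j<i$. This bookkeeping around the intersection $X_1 \cap X_2$ is really the only subtle point; the existence of an augmenting element at each step is a routine appeal to the exchange axiom, and the translation back from $M'$ to $M$ is immediate via the identity $D_2 - \{x_{2,1},\ldots,x_{2,i}\} + \{x_{1,1},\ldots,x_{1,i}\} = (D_2 \setminus X_2) \cup J_i$.
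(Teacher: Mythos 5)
Your proof is correct and essentially recreates the paper's argument: both build the ordering of $X_1$ one element at a time by removing $x_{2,i}$ from the current basis and augmenting back to a basis from $X_1$ (equivalently, in the paper, from the basis $D_2'=D_2-X_2+X_1$ inside $M$ itself). Passing to the contraction $M/(D_2\setminus X_2)$ is a tidy repackaging of the same exchange step, and your explicit check that the selected $x_{1,i}$ are pairwise distinct and lie in $X_1$ spells out a point the paper leaves implicit.
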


\begin{proof}
Let $D_2' = D_2 - X_2 + X_1,$ which by assumption is a basis.  Assume that the elements of $X_2$ are ordered as $x_{21} \prec x_{22} \prec \cdots \prec x_{2k}$.    Since $|D_2 - x_{21}| < |D_2'|,$ there exists an element $x_{11} \in D_2'$ for which $D_2^1 =D_2 - x_{21} + x_{11}$ is a basis. We note that $x_{11} \in X_1.$ Next, since $|D_2^1 - x_{22}| < |D_2'|,$ there exists $x_{12} \in D_2'$ for which $D_2^2 = D_2^1 - x_{22} + x_{12}$ is a basis.  Again, we note that $x_{12} \in X_1$.  Now suppose that for some $1\le i <k$ we have that  $D_2^i = D_2 - \{ x_{21}, \dots ,x_{2i} \} + \{ x_{11}, \dots ,x_{1i} \}$ is a basis, where $\{ x_{11}, \dots , x_{1i} \} \subseteq X_1.$  Since $|D_2^i - x_{2(i+1)}| < |D_2'|,$ there exists $x_{1(i+1)} \in D_2'$ such that $D_2^{i+1} = D_2^i - x_{2j} + x_{1(j+1)}$ is a basis, and furthermore, we must have $x_{1(i+1)}\in X_1.$  Continuing, we generate the elements $x_{11}, x_{12}, \dots ,x_{1k}$ and this will be the desired ordering of the elements of $X_1.$ 
\end{proof}

Recall that $\beta_k = \left( 1 - \frac 1q \right)^k.$  We have the following lemma:

\begin{lemma}
Let $S \subseteq [\ell]$ where $s = |S|.$  Then $$\mathbb{P}(Z'_{i} =0, \ \forall i\in S \ \big| \ Z_{i} =1, \forall i \in S) \le \ (1 - \beta_k)^s.$$
In addition, $\mathbb{P}(Z' =0 \ \big| \ Z = s) \le (1 - \beta_k)^s.$
\label{lem-lem4}
\end{lemma}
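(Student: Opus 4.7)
My plan is to prove the bound by (i) reducing the question to a step-by-step ``greedy'' single-swap procedure that succeeds with probability at least $\beta_k$ per $i$, and (ii) exploiting the fact that the column blocks $J_i$ for distinct $i \in S$ are pairwise disjoint, which decouples the procedures across $i$.

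First, without loss of generality take $B_1 = \{\mathbf{e}_1, \dots, \mathbf{e}_n\}$, so that $\U_1 = \{\mathbf{e}_1, \dots, \mathbf{e}_k\}$ and $B_2$ is encoded by the random nonsingular matrix $\overline{V}$. For $i \in [\ell]$ set $B^{(i)} = \overline{V}_{[k], J_i}$ and, using the block inversion formula, observe that $(\overline{V}^{-1})_{J_i,[k]}$ equals $(M^{(i)})^{-1}$, where $M^{(i)}$ is the Schur complement $B^{(i)} - \overline{V}_{[k],[n]\setminus J_i}\bigl(\overline{V}_{[n\backslash k],[n]\setminus J_i}\bigr)^{-1}\overline{V}_{[n\backslash k],J_i}$. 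With this notation, a single swap $\mathbf{e}_a \leftrightarrow \mathbf{V}_{n_{i-1}+b}$ preserves basis status on both sides simultaneously iff $B^{(i)}_{a,b} \neq 0$ and $\bigl((M^{(i)})^{-1}\bigr)_{b,a} \neq 0$; moreover a short computation shows that $Z_i = 1$ is equivalent to both $B^{(i)}$ and $M^{(i)}$ being nonsingular.

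Next, I would implement a greedy procedure for a single $i$: starting from $(B_1, B_2, \U_1, \V_i)$ with $Z_i = 1$, at each of $k$ steps I pick a valid single swap $(a_j, b_j)$ between the remaining $\U$- and $\V$-sets, commit to it, and recurse. A short computation confirms that after a valid swap the residual quadruple still satisfies the set-exchange property, so the procedure is well-defined; and if it completes all $k$ steps, the sequence $(a_1, b_1), \dots, (a_k, b_k)$ is exactly a serial ordering, witnessing $Z'_i = 1$. To bound the success probability of each step by $1 - 1/q$, I would condition on the ``environment'' --- all of $\overline{V}$ except for a single critical scalar --- and argue that at step $j$ the validity of some single swap reduces to the requirement that this scalar avoid a single forbidden value in $\mathbb{F}_q$, exactly as in the proof of Observation 2.2.

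For the joint statement across $i \in S$, the critical scalars identified in the per-step analysis live in the disjoint column blocks $J_i$ of $\overline{V}$, and are therefore independent conditional on the shared environment. Thus the events ``the greedy procedure succeeds for $i$'' decouple, and multiplying conditional probabilities yields the bound $(1 - \beta_k)^s$. The second assertion, $\mathbb{P}(Z'=0 \mid Z = s) \le (1-\beta_k)^s$, then follows by summing the first over the choice of $S \subseteq [\ell]$ with $|S|=s$ and using that $Z'_i \le Z_i$.

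The main obstacle lies in making the per-step analysis rigorous: one must identify, for each step $j$ of the greedy procedure, an explicit scalar in $\mathbb{F}_q$ whose non-vanishing suffices for the existence of a valid single swap, and verify that its failure probability is at most $1/q$. Because the validity condition couples $B^{(i)}$ with $(M^{(i)})^{-1}$, this requires carefully tracking how the Schur complement and its inverse respond to the previously committed swaps --- an inductive bookkeeping step that is essentially the analogue, in this Schur-complement setting, of the entry-by-entry filling argument used to prove Observation 2.2.
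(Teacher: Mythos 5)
Your proposal takes a genuinely different route from the paper, and unfortunately the difference is exactly where the main difficulty lies. The paper's proof hinges on a clean two-step reduction that your greedy scheme bypasses: first it invokes Observation~\ref{obs0} to re-order $\V_i$ so that, \emph{given only} $Z_i=1$, the $B_1$-side serial chain $B_1-\{U_1,\dots,U_j\}+\{V_{n_{i-1}+1},\dots,V_{n_{i-1}+j}\}$ is automatically a basis for every $j$ (this costs no probability at all); it then row-reduces the concatenated matrix so that $\overline V$ becomes $I_n$ and observes that the remaining $B_2$-side chain holds precisely when $\overline{U}'_{J_i,[k]}$ has sequential full rank, which is the single deterministic condition that Observation~\ref{obs2} prices at $\beta_k=(1-1/q)^k$. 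The whole $\beta_k$ bound thus arrives in one shot, without any per-step analysis.

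Your greedy scheme, by contrast, tries to win both sides of each single swap simultaneously. The validity condition at each step is a conjunction of two constraints, $B^{(i)}_{a,b}\neq 0$ \emph{and} $\bigl((M^{(i)})^{-1}\bigr)_{b,a}\neq 0$, and it is not a routine analogue of the entry-by-entry filling in Observation~\ref{obs2}, where one only ever needs a single leading minor to be nonzero. There is no clear ``one critical scalar'' whose avoidance of a single forbidden value certifies the existence of a valid swap: the Schur complement $M^{(i)}$ and its inverse depend globally on $\overline V_{[n],J_i}$ (including $\overline V_{[n\setminus k],J_i}$), so revealing one scalar perturbs both constraints at once, and the OR over candidate pairs $(a,b)$ does not obviously reduce to a univariate nonvanishing condition with failure probability $\le 1/q$. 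Indeed, if a valid swap within $(\U_1,\V_i)$ were always available, your own induction (which correctly shows that a valid swap preserves the set-exchange property) would finish all $k$ steps deterministically and prove $Z_i'=1$ whenever $Z_i=1$ --- but that would establish the Kotlar--Ziv conjecture for representable matroids, which is open; so valid swaps can genuinely fail to exist, and the per-step success probability is precisely the thing that must be controlled, not a bookkeeping detail.

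The decoupling step also needs repair: for $i\neq i'$ the ``environments'' $\overline V$ outside $J_i$ and outside $J_{i'}$ overlap heavily, so the critical scalars are not independent conditional on a common environment in any literal sense. The paper avoids this by phrasing everything in the $\overline U'$ picture, where the events $\{\overline U'_{J_i,[k]}\text{ has sequential full rank}\}$ for $i\in S$ concern pairwise disjoint row blocks of a single matrix, and the conditioning $Z_i=1$ is likewise a disjoint-block condition. Finally, for the second assertion, note that $\{Z=s\}$ decomposes over events $\{i:Z_i=1\}=S$ with $|S|=s$; since $Z_i'\le Z_i$ one has $\{Z'=0\}\cap\{\,\{i:Z_i=1\}=S\,\}=\{Z_i'=0\ \forall i\in S\}\cap\{\,\{i:Z_i=1\}=S\,\}$, and the pointwise-per-block bound in the first assertion then applies under this finer conditioning; simply summing the first assertion over $S$ does not by itself give what is needed because its conditioning event $\bigcap_{i\in S}\{Z_i=1\}$ is strictly larger than $\{\,\{i:Z_i=1\}=S\,\}$.
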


\begin{proof}
%Assume that $Z_{i_1} =1, Z_{i_2} =1, \dots ,Z_{i_s} =1.$ By row reduction, we may assume that $B_2$ is the standard basis and $\overline{V}$ is the identity matrix. Furthermore, since $Z_{i_j} =1, $ for all $j\in [s],$ we have
%$\overline{U}_{J_{n_{i_j}},[k]}$ has full rank for all $j\in [s].$  We see that $\overleftrightarrow{Z}_{i_j} =1$ if and only if $\overline{U}_{J_{n_{i_j}},[k]}$ is full, and the probability of this happening is at least $\beta_k.$ Filling the entries for $\overline{U},$ starting with 
%$\overline{U}_{[n],[k]}$, and filling in $\overline{U}_{J_{n_1},[k]}, \overline{U}_{J_{n_2},[k]},$ etc, the probability that none of the matrices  $\overline{U}_{J_{n_{i_j}},[k]},\ j\in [s]$ is full, is at most $(1-\beta_k)^s.$ Thus we have that 
%$\mathbb{P}(\overleftrightarrow{Z}_{i_1} =0, \ \overleftrightarrow{Z}_{i_2} =0, \dots , \overleftrightarrow{Z}_{i_s} =0 \ \big| \ Z_{i_1} =1, Z_{i_2} =1, \dots ,Z_{i_s} =1) \le \ (1 - \beta_k)^s.$
%
%The second assertion follows directly from the first statement.
Assume that $Z_{i} =1, \forall i \in S.$  Then $B_1, \U_1  \stackrel[]{}{\rightarrow }B_2, \V_{i}$ and $B_1, \U_1  \stackrel[]{}{\leftarrow }B_2, \V_{i}$, for all $i \in S.$   Let $i\in S.$  It follows from Observation \ref{obs0} that for some ordering $V_{i_1} \prec V_{i_2} \prec \cdots \prec V_{i_k}$ of $\V_i$ we have, for $j = 1, \dots ,k,$ $B_1 - \{ U_1, \dots ,U_j \} + \{ V_{i_1}, \dots ,V_{i_j} \}$ is a basis. % To see this, we may assume $B_1$ is the standard basis and $\overline{U}$ is the identity matrix.  Since $B_1, \U_1  \stackrel[M]{}{\leftarrow }B_2, \V_{i}$, we have that the matrix $\overline{V}_{[k], J_i}$ has full rank.  By permuting the columns of $\overline{V}_{[k], J_i}$, we can transform it into a full matrix.  Equivalently, there is some ordering $V_{i_1} \prec V_{i_2} \prec \cdots \prec V_{i_k}$ of $\V_i$ such that $B_1 - \{ U_1, \dots ,U_j \} + \{ V_{i_1}, \dots ,V_{i_j} \}$ is a basis for $j=1, \dots ,k.$
In light of this, we may assume that for each $i\in S,$ we have already re-ordered the vectors in $\V_i$ so that for $j = 1, \dots ,k,$ $B_1 - \{ U_1, \dots ,U_j \} + \{ V_{n_{i-1}+1}, \dots ,V_{n_{i-1}+j} \}$ is a basis.  
Let $\overline{W}$ be the matrix obtained by concatenating $\overline{U}$ with $\overline{V}$ and let $\overline{W}'$ be the matrix obtained by performing row operations on $\overline{W}$ so that $\overline{V}$ is reduced to the identity matrix $I_n$ and $\overline{U}$ is reduced to a matrix $\overline{U}' = (u_{ij}').$ 
 Note that the the row operations do not change the independence of subsets of columns in the column space of $\overline{W}$; that is, a set of columns in $\overline{W}$ is linearly independent if and only if the corresponding subset of columns in $\overline{W}'$ is linearly independent.  Let $i\in S.$  Since $B_1, \U_1  \stackrel[M]{}{\rightarrow }B_2, \V_{i}$, it follows that $\overline{U}'_{J_i,[k]}$ has full rank.  Moreover, if $\overline{U}'_{J_i,[k]}$ has sequential full rank, then we see that for $j=1, \dots ,k,$ replacing the first $j$ columns of $I_n$ with first $j$ columns of $\overline{U}'_{[n],[k]}$ will yield a matrix of full rank.
Thus if $\overline{U}'_{J_i,[k]}$ has sequential full rank, then for $j=1, \dots ,k,$ $B_2 - \{ V_{n_{i-1} +1}, \dots ,V_{n_{i-1} +j} \} + \{ U_1, \dots ,U_j \}$ is a basis.  In this case, we see that $Z'_i =1.$  By Observation \ref{obs2}, the probability that 
$\overline{U}'_{J_i,[k]}$ has sequential full rank is $\beta_k = (1 - \frac 1q)^k$.  Thus $\mathbb{P}(Z_i' =1 \big| \ Z_i=1) \ge \beta_k$, and this bound is independent of the other random variables.  We now see that $\mathbb{P}(Z'_i = 0, \  \forall i\in S \ \big| \ Z_i = 1, \forall i\in S) \le (1-\beta_k)^s$.

The second assertion follows from the first statement.
\end{proof}

\subsection{Proof of Theorem \ref{the-main}}

%Let $k = \ln (n).$  Then $\ell = \frac n{\ln (n)}.$  
Assume that $q >2.$  By Lemma \ref{lem-Zbound}, there is a constant $c >0$ such that $\mathbb{P}(Z \ge c\ell) \rightarrow 1,$ as $\ell \rightarrow \infty.$
%By Lemma \ref{lem-Zi}, $\mathbb{P}(\overleftrightarrow{Z}_i =1 \big| \ Z_i =1) \ge (1 - \frac 1q)^k.$ 
%Let $\beta_k = \left( 1 - \frac 1q \right)^k.$
%
%
To complete the proof of Theorem \ref{the-main}, it suffices to show that as $n\rightarrow \infty$, $\mathbb{P}(Z' > 0) \rightarrow 1.$  We have by Lemma \ref{lem-lem4} that
\begin{align*}
\mathbb{P}(Z' = 0) &= \sum_{s =0}^\ell \mathbb{P}(Z' = 0, Z=s) = \sum_{s =0}^\ell \mathbb{P}(Z' = 0\ \big| \  Z=s) \mathbb{P}(Z=s)\\ 
&\le \sum_{s =0}^\ell (1 - \beta_k)^s \mathbb{P}(Z=s) = \sum_{s < c\ell} (1 - \beta_k)^s \mathbb{P}(Z=s) +  \sum_{s \ge c\ell} (1 - \beta_k)^s \mathbb{P}(Z=s)\\
&\le \mathbb{P} (Z < c\ell) + (1 - \beta_k)^{c\ell} \mathbb{P}(Z \ge c\ell).
\end{align*}

We have that $\lim_{n\rightarrow \infty} \mathbb{P} (Z < c\ell) = 0.$   We claim that $\lim_{n\rightarrow \infty}(1- \beta_k)^{c\ell} = 0.$  It suffices to prove this when $k = \ln (n)$  (since $\beta_{k'} > \beta_k$ and $\frac n{k'} > \frac nk$ if $k' < k$).  Let $\delta = \ln (1 - \frac 1q ).$
Observing that $-1 < \delta <0$ we have
\begin{align*}
(1- \beta_k)^{c\ell} &= (1- (1-\frac 1q)^k )^{c\ell} = (1- e^{\delta k})^{c\ell}\\
&= (1 - e^{\delta \ln (n)})^{c\ell} = (1 - n^{\delta})^{c\ell} = \left((1- \frac 1{n^{-\delta}} )^{n^{-\delta}}\right)^{\frac {cn^{1+ \delta}}{\ln(n)}}\\ 
\end{align*}

Since $\lim_{n\rightarrow \infty} (1- \frac 1{n^{-\delta}} )^{n^{-\delta}} = e^{-1},$ it follows that $\lim_{n \rightarrow \infty}  \left((1- \frac 1{n^{-\delta}} )^{n^{-\delta}}\right)^{\frac {cn^{1+ \delta}}{\ln(n)}} =0.$  Thus $\lim_{n\rightarrow \infty} (1- \beta_k)^{c\ell} = 0.$
It now follows from the above that as $n\rightarrow \infty$, $\mathbb{P}(Z' > 0) \rightarrow 1.$  This completes the proof.

\section{Discussion}

Given that Theorem \ref{the-main} does not apply to binary matroids,  the following is a natural problem:

 \begin{problem}
 Let $M$ be a rank-$n$ binary matroid and let $k$ be a fixed integer.  Suppose one chooses randomly two bases $B_1$ and $B_2$ and then chooses randomly a $k$-subset $X \subseteq B_1.$  Is the probability of finding a $k$-subset $Y \subseteq B_2$ which is serially exchangeable with $X$ tending to one as $n\rightarrow \infty$ ?
 \end{problem}

\end{document}